%
%
%
%
%
%
%
%
\documentclass[secnum,leqno]{rims-bessatsu}
%
%
\usepackage{amssymb, amsmath}
\usepackage{graphics}
\usepackage[dvips]{graphicx}
\usepackage{eepic}
\usepackage{epic}
%
%

\usepackage{amscd}
\usepackage{graphicx, color}

\newtheorem{thm}{Theorem}[section]
\newtheorem{crl}[thm]{Corollary}

\newtheorem{lmm}[thm]{Lemma}

\newtheorem{prp}[thm]{Proposition}

\theoremstyle{definition}

\newtheorem{conj}[thm]{\bf Conjecture}
\theoremstyle{remark}
\newtheorem{rem}{Remark}
%
%

\usepackage[all]{xy}

\newcommand{\Hol}{\mbox{{\rm Hol}}}
\newcommand{\Alg}{\mbox{{\rm Alg}}}



\newcommand{\Z}{\Bbb Z}
\newcommand{\C}{\Bbb C}
\newcommand{\R}{\Bbb R}

\newcommand{\K}{\Bbb K}

\newcommand{\RP}{\Bbb R\mbox{{\rm P}}}
\newcommand{\KP}{\Bbb K\mbox{{\rm P}}}

\newcommand{\Map}{\mbox{{\rm Map}}}

\newcommand{\CP}{\Bbb C {\rm P}}

\newcommand{\p}{\prime}

\newcommand{\I}{\mbox{{\rm (i)}}}
\newcommand{\II}{\mbox{{\rm (ii)}}}

\numberwithin{equation}{section}

%
%
\title{
Spaces of equivariant algebraic maps from real projective spaces
into complex projective spaces}
\TitleHead{Spaces of equivariant algebraic maps into complex projective spaces}          
\author{Andrzej \textsc{Kozlowski}
\footnote{Institute of Applied Mathematics and Mechanics,University of Warsaw
Banacha 2, 02-097 Warsaw,
Poland
\newline e-mail: \texttt{akoz@mimuw.edu.pl}}
          ~and Kohhei \textsc{Yamaguchi}\footnote{
          University of Electro-Communications, Chofu Tokyo 182-8585, Japan
\endgraf e-mail: \texttt{kohhei@im.uec.ac.jp}}}
\AuthorHead{A. Kozlowski and K. Yamaguchi
}         
\classification{Primary 55P10, 55R80; Secondly 55P35
}
\support{The second author is partially supported by Grant-in-Aid for Scientific Research
(No. 23540079), The Ministry of Education, Culture, Sports Science and Technology, Japan}  
\keywords{\textit{Simplicial resolution, algebraic map, 
homotopy equivalence}:}         
%
%
\VolumeNo{x}           
\YearNo{201x}           
\PagesNo{000--000}      
%
%
\communication{Received April 20, 201x.
Revised September 11, 201x.}      
%
%
%
\begin{document}
%

\maketitle

\begin{abstract}      
We study the homotopy types of certain spaces closely related to the spaces of algebraic 
(rational) maps
from the $m$ dimensional real projective space into
the $n$ dimensional complex projective space for
$2\leq m\leq 2n$ (we conjecture this relation to be a homotopy equivalence).
In \cite{KY3} we proved  that natural maps of these spaces to the spaces of all continuous maps are
$\Z /2$-equivariant homotopy equivalences, where the $\Z/2$-equivariant action is
induced from the conjugation on $\C$.
In the same article we also proved  that
the homotopy types of the terms of the natural  degree  filtration approximate closer 
and closer  the homotopy type
of the space of continuous maps  and  obtained bounds that describe the closeness of the approximation in terms of the degrees of the maps.
In this paper, we improve the estimates of the bounds by using new methods invented in \cite{Mo3} and used in \cite{KY4}.

In addition, in the the last section, 
we prove a special case ($m=1$) of the conjecture
stated in \cite{AKY1} that our spaces are homotopy equivalent to the spaces of algebraic maps. 

\end{abstract}

\section{Introduction.}\label{section 1}
\paragraph{Summary of the contents.}

 In \cite{Mo2} Mostovoy showed (modulo certain errors that were corrected in \cite{Mo3}) that 
 if $2\leq m \leq n$ then the space  $\Hol (\CP^m,\CP^n)$ of holomorphic maps 
 from $\CP^m$ to $\CP^n$ has the same homotopy type of the space 
 $\Map (\CP^m,\CP^n)$ of coresponding continuous maps
up to a certain dimension, 
 which generalizes the  classical result of Segal \cite{Se} for $m=1$.
  In \cite{AKY1} we used a variant of Mostovoy's method to the analogous problem for  
  the space $\Alg (\RP^m,\RP^n)$ of algebraic maps from $\RP^m$ to $\RP^n$ for $2\leq m<n$.
  These our results can also be seen as generalizations of the results of \cite{GKY2} and \cite{Mo1}.
  In \cite{KY3} we used analogous methods to prove a homotopy 
  (or homology) approximation theorem for the space
  $\Alg (\RP^m,\CP^n)$ of real algebraic maps from $\RP^m$ to $\CP^n$ 
  when $2\leq m\leq 2n$. Combing this result with the main theorem of  \cite{AKY1},  
  we obtained a  $\Bbb Z/2$-equivariant homotopy approximation 
  result in \cite{KY3} (where the $\Bbb Z/2$-action on $\CP^n$ is induced by complex conjugation),   
  which is itself a generalization of  \cite[Theorem 3.7]{GKY2}. 
  
In his recent paper \cite{Mo3},  Mostovoy, in addition to correcting the mistakes in \cite{Mo2}, introduced a new idea that allowed him to improve the bounds on the degree of homotopy groups in his homotopy approximation theorem. With the help of analogous methods and some additional techniques
of ours,
in the recent work \cite{KY4}, we improved the bounds of the homotopy approximation theorem 
given in \cite{AKY1}. 
\par\vspace{3mm}\par
The first purpose of this article is to obtain a  improved version of the equivariant homotopy approximation theorem of \cite{KY3}. Since the arguments are analogous to those in \cite{KY4}, we only state our new results and refer to the references for detailed proofs 
(cf. Remark \ref{Remark: proof} below).

The second purpose is to  consider the possibility of approximating the space of continuous maps by its subspaces of algebraic maps of a fixed degree, analogously to the results of Segal \cite{Se} and Mostovoy \cite{Mo3}. 
Because the situation is analogous, in this paper
we only consider the space of algebraic maps from $\RP^m$ to $\RP^n$.
For this purpose we only need to prove that the  projection maps
$$
\Psi_d:A_d(m,n)\to \Alg_d^*(\RP^m,\RP^n), \quad\Gamma_d:\tilde{A}_d(m,n)\to\Alg_d(\RP^m,\RP^n)
$$ are homotopy equivalences. 
 It is easy to see that both of these maps have contractible fibres, however that alone is not sufficient to prove that the maps are homotopy equivalencies. 
 In fact, in \cite[Conjecture 3.8]{AKY1}, 
 we conjectured that this is true in general. Here we provide an informal argument that shows that this is true for $m=1$.  A proof of this fact already appears in   \cite[Proposition 2.1]{Mo1}. However we find that the argument given there is unconvincing since it does not seem to make any use of any properties of the map $\Psi_d$ (e.g. the fact that is a quasi-fibration).



\par\vspace{2mm}\par
In the remainder of this section we briefly  describe the notation  and  the definitions.
\paragraph{Notation.}
The notation of this paper is essentially analogous to the one used in \cite{AKY1}
and \cite{KY3}. Note that the presence of $\Bbb C$ indicates that a complex case is being considered (i.e. maps take values in $\CP^n$ rather than $\RP^n$ or polynomials have coefficients in $\Bbb C$ rather than $\Bbb R$, etc.).
\par
Let $m$ and $n$ be positive integers such that
$1\leq m \leq 2n$.
We choose  ${\bf e}_m=[1:0:\cdots :0]\in \RP^m$ and
${\bf e}_n^{\p}=[1:0:\cdots :0]\in\Bbb \CP^n$
as the base points of $\RP^m$ and  $\Bbb  \CP^n$, respectively.
Let $\Map^*(\RP^m,\Bbb  \CP^n)$ denote the space
consisting of all based maps
$f:(\RP^m,{\bf e}_{m})\to (\Bbb \CP^n,{\bf e}^{\p}_n)$. 
When $m\geq 2$,
we denote
by $\Map_{\epsilon}^* (\RP^m,\CP^n)$
the corresponding path component
of $\Map^* (\RP^m,\CP^n)$
for each
$\epsilon \in \Z/2=\{0,1\}=
\pi_0(\Map^*(\RP^m,\Bbb  \CP^n))$
(\cite{CS}). 
Similarly, let $\Map (\RP^m,\CP^n)$ denote the space
of all free maps $f:\RP^m\to\CP^n$ and
$\Map_{\epsilon}(\RP^m,\CP^n)$ the corresponding path component
of $\Map (\RP^m,\CP^n)$.
\par
\par\vspace{2mm}\par
A  map $f:\RP^m\to \CP^n$ is called a
{\it algebraic map of degree }$d$ if it can be represented as
a rational map of the form
$f=[f_0:\cdots :f_n]$ such that
 $f_0,\cdots ,f_n\in\Bbb C [z_0,\cdots ,z_m]$ are homogeneous polynomials of the same degree $d$
with no common {\it real } roots except ${\bf 0}_{m+1}=(0,\cdots ,0)\in\R^{m+1}$.
 We  denote by $\Alg_d(\RP^m,\CP^n)$ (resp. $\Alg_d^*(\RP^m,\CP^n)$) the space
 consisting of all (resp. based) algebraic maps $f:\RP^m\to \CP^n$
 of degree $d$.
 It is easy to see that there are inclusions 
 $$
 \Alg_d(\RP^m,\CP^n)\subset \Map_{[d]_2}(\RP^m,\CP^n),
\quad
\Alg^*_d(\RP^m,\CP^n)\subset \Map^*_{[d]_2}(\RP^m,\CP^n),
$$
 where $[d]_2\in\Z/2=\{0,1\}$ denotes the integer $d$ mod $2$.
\par
Let
$ A_{d}(m,n)(\Bbb C)$ denote  the space consisting of all $(n+1)$-tuples 
$(f_0,\cdots ,f_n)\in \Bbb C[z_0,\cdots ,z_m]^{n+1}$
of  homogeneous polynomials of degree $d$  with coefficients in $\Bbb C$  and without non-trivial common real roots
(but possibly with non-trivial common {\it non-real} ones).
Since $\C^*$ acts on $A_d(m,n)(\C)$ freely, one can define the projectivisation
$\tilde{A}_d^{\C}(m,n)$ by the orbit space
$\tilde{A}_d^{\C}(m,n)=A_d(m,n)(\C)/\C^*$.
\par  
Let $ A_{d}^{\Bbb C}(m,n)\subset A_d(m,n)(\Bbb C)$ be the subspace consisting of 
all $(n+1)$-tuples 
$(f_0,\cdots ,f_n)\in A_d(m,n)(\Bbb C)$
such that the coefficient of $z_0^d$ in $f_0$ is 1 and $0$ in the other $f_k$'s
$(k\not= 0$).
Then there are natural  projection maps
\begin{equation*}\label{Psi}
\Psi_d^{\Bbb C}:A_d^{\Bbb C}(m,n) \to \Alg_d^*(\RP^m,\CP^n),
\quad
\Gamma_d^{\C}:\tilde{A}_d^{\C}(m,n)\to \Alg_d(\RP^m,\CP^n).
\end{equation*}
\par
For $m\geq 2$ and $g\in \Alg_d^*(\RP^{m-1},\CP^n)$ a fixed algebraic map,
we denote by
$\Alg_d^{\Bbb C}(m,n;g)$ and $F(m,n;g)$  the spaces defined by
$$
\begin{cases}
\Alg_d^{\Bbb C}(m,n;g) &=\ \{f\in \Alg_d^*(\RP^m,\CP^n):f\vert \RP^{m-1}=g\},
\\
F(m,n;g) & =\ \{f\in \Map_{[d]_2}^*(\RP^m,\CP^n):f\vert \RP^{m-1}=g\}.
\end{cases}
$$
It is well-known that there is a homotopy equivalence
$F(m,n;g)\simeq  \Omega^m\CP^n$ (\cite{Sasao}).
\par
Let ${\cal H}^m_d$ denote the space of all
homogenous polynomials $h\in\C [z_0,\cdots ,z_m]$ of degree
$d$. We choose a fixed tuple ${\rm g}=(g_0,\cdots ,g_n)\in A_d^{\C}(m-1,n)$ such that
$\Psi_d^{\C}({\rm g})=g$. 
In this situation, we denote by
$A_d^*(\C)\subset ({\cal H}^m_{d})^{n+1}$  the subspace
given by
$A_d^*(\C):=\Big\{(g_0+z_mh_0,\cdots ,g_n+z_mh_n):h_k\in {\cal H}^m_{d-1}
\ (0\leq k\leq n)\Big\}$, and  define
the subspace $A_d^{\Bbb C}(m,n;g)\subset A_d^{\Bbb C}(m,n)$  
by 
$
A_d^{\Bbb C}(m,n;g)=
A_d^{\Bbb C}(m,n)\cap A^*_d(\C).
$
\par
Because
$\Psi_d^{\C}(f_0,\cdots ,f_n)\in \Alg_d^{\C}(m,n;g)$ for
any
$(f_0,\cdots ,f_n)\in A_d^{\C}(m,n;g)$,
one can define the projection
${\Psi_d^{\C}}^{\p}
:A^{\C}_d(m,n;g)\to \Alg_d^{\C}(m,n;g)$
by the restriction 
${\Psi_d^{\C}}^{\p}=\Psi_d^{\C}\vert A^{\C}_d(m,n;g).$
Let 
\begin{equation}
\begin{cases}
i_{d,\Bbb C}:\Alg_d^*(\RP^m,\CP^n)\stackrel{\subset}{\rightarrow} \Map_{[d]_2}^*(\RP^m,\CP^n),
\\
j_{d,\Bbb C}:\Alg_d(\RP^m,\CP^n)\stackrel{\subset}{\rightarrow} \Map_{[d]_2}(\RP^m,\CP^n),
\\
i_{d,\Bbb C}^{\p}:\Alg_d^{\Bbb C}(m,n;g)\stackrel{\subset}{\rightarrow} F(m,n;g)\simeq \Omega^m\CP^n
\end{cases}
\end{equation}
denote the inclusions and
\begin{equation}
\begin{cases}
i_d^{\Bbb C}=i_{d,\Bbb C}\circ \Psi_d^{\Bbb C}:A_d^{\Bbb C}(m,n)\to \Map_{[d]_2}^*(\RP^m,\CP^n),
\\
j_d^{\Bbb C}=j_{d,\Bbb C}\circ \Gamma_d^{\Bbb C}:\tilde{A}_d^{\Bbb C}(m,n)\to \Map_{[d]_2}^*(\RP^m,\CP^n),
\\
i_d^{\p\p}=i^{\p}_{d,\C}\circ {\Psi_d^{\C}}^{\p}:
A_d^{\C}(m,n;g) \to F(m,n;g)
\end{cases}
\end{equation}
the natural maps.
\par\vspace{3mm}\par
The notations used in this paper can be summarized in the following 
two diagrams, where $g\in \Alg_d^*(\RP^{m-1},\KP^n)$ denotes a fixed based algebraic map 
of degree $d$ and  
we omit their details of the notations in  (\ref{diagramR})
and refer the reader to \cite{AKY1}.  

\begin{equation}\label{diagramC}
\xymatrix{%
\Map_{[d]_2}(\RP^m,\CP^n)     &     \Map^*_{[d]_2}(\RP^m,\CP^n) \ar@{_{(}->}[l]  &   F^{\C}(m,n;g) \ar@{_{(}->}[l] \\
\Alg_d(\RP^m,\CP^n) \ar@{^{(}->}[u]^{j_{d,\C}}     &     
\Alg_d^*(\RP^m,\CP^n) \ar@{^{(}->}[u]^{i_{d,\C}} \ar@{_{(}->}[l]  &   
\Alg^{\C}_d(m,n;g)  \ar@{^{(}->}[u]^{i_{d,\C}^{\p}} \ar@{_{(}->}[l]\\
\tilde{A}_d^{\C}(m,n) \ar@{->>}[u]^{\Gamma^{\C}_{d}} 
\ar@/^4pc/[uu]^>>>>>>{j_d^{\C}}   &     A_d^{\C}(m,n) \ar@{->>}[u]^{\Psi^{\C}_d} \ar@/^4pc/[uu]^>>>>>>{i_d^{\C}} &   
A_d^{\C}(m,n;g) \ar@{->>}
[u]^{{\Psi_d^{\C}}^{\p}} \ar@/^4pc/[uu]^>>>>>>{i_d^{\p\p}}
}
\end{equation}
\begin{equation}\label{diagramR}
\xymatrix{%
\Map_{[d]_2}(\RP^m,\RP^n)     &     \Map^*_{[d]_2}(\RP^m,\RP^n) \ar@{_{(}->}[l]  &   F(m,n;g) \ar@{_{(}->}[l] \\
\Alg_d(\RP^m,\RP^n) \ar@{^{(}->}[u]^{j_{d,\R}}     &     \Alg_d^*(\RP^m,\RP^n) \ar@{^{(}->}[u]^{i_{d,\R}} \ar@{_{(}->}[l]  &   \Alg_d(m,n;g)  \ar@{^{(}->}[u]^{i_{d,\R}^{\p}} \ar@{_{(}->}[l]\\
\tilde{A}_d(m,n) \ar@{->>}[u]^{\Gamma_d} \ar@/^4pc/[uu]^>>>>>>{j_d}   &     A_d(m,n) \ar@{->>}[u]^{\Psi_d} \ar@/^4pc/[uu]^>>>>>>{i_d} &   A_d(m,n;g) \ar@{->>}[u]^{\Psi_d^{\p}} \ar@/^4pc/[uu]^>>>>>>{i_d^{\p}}
}
\end{equation}




\section{The main results.}\label{section: main}

In this section we state the main results of this paper.
First define the positive integers
$D^*_{\Bbb K}(d;m,n)$ and $D_{\Bbb K}(d;m,n)$
by
\begin{eqnarray}\label{Dnumber}
D_{\K}^*(d;m,n) &=&
\begin{cases}
(n-m)\big(\lfloor \frac{d+1}{2}\rfloor  +1\big) -1
& \mbox{if } \K =\R,
\\
(2n-m+1)\big(\lfloor \frac{d+1}{2}\rfloor  +1\big) -1
& \mbox{if } \K =\C,
\end{cases}
\\
D_{\K}(d;m,n) &=&
\begin{cases}
(n-m)\big(d+1\big) -1
& \mbox{if } \K =\R,
\\
(2n-m+1)\big(d  +1\big) -1
& \mbox{if } \K =\C,
\end{cases}
\end{eqnarray}
where
$\lfloor x\rfloor$ is the integer part of a real number $x$
and we
remark that the equality $D_{\C}(d;m,n)=D_{\R}(d;m,2n+1)$ 
holds.
\par\vspace{2mm}\par
We first recall 
the following two results. 

\begin{thm}[The case $(\K ,m)=(\R ,1)$; \cite{KY1}, \cite{Y5}]
\label{thm: KY1}
If $m=1<n$, the natural map
$i_d:A_d(1,n)\to \Map^*_{[d]_2}(\RP^1,\RP^n)\simeq \Omega S^n$
is a homotopy equivalence up to dimension $D_{\R}(d;1,n)=(n-1)(d+1)-1.$
\qed
\end{thm}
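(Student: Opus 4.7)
The plan is to follow the Vassiliev--Mostovoy style of argument that underpins the rest of this paper. First, note that the basepoint normalization (the coefficient of $z_0^d$ in $f_0$ equals $1$, while the coefficient of $z_0^d$ in $f_k$ equals $0$ for $k\neq 0$) realizes $A_d(1,n)$ as the complement of a real algebraic set $\Sigma_d$ inside a real affine space $V_d$ of dimension $(n+1)d$. The discriminant $\Sigma_d\subset V_d$ consists precisely of those normalized tuples whose $n+1$ components share at least one common real zero on $\RP^1$.

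Next, I would construct a non-degenerate simplicial resolution $\widetilde{\Sigma}_d\to \Sigma_d$ in the sense of Vassiliev by associating to each tuple having $k$ common real roots in $\RP^1$ the geometric $(k-1)$-simplex spanned by those roots. This yields an increasing filtration $F_1\subset F_2\subset\cdots\subset\widetilde{\Sigma}_d$ together with a proper homotopy equivalence $\widetilde{\Sigma}_d\simeq\Sigma_d$. Each successive stratum $F_k\setminus F_{k-1}$ admits a natural map to the unordered configuration space of $k$ distinct points of $\RP^1$, with fibre the product of an open $(k-1)$-simplex and the affine subspace of $V_d$ consisting of tuples that vanish at the prescribed points; each common real root imposes $n+1$ independent linear conditions (compatible with the basepoint normalization), so the codimensions grow linearly in $k$.

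The Vassiliev spectral sequence associated to this filtration, combined with Alexander duality in $V_d$, computes $H^*(A_d(1,n))$ in a range. A parallel and much simpler simplicial-resolution argument (equivalently, the James model) yields the cohomology of $\Omega S^n$, and the map $i_d$ is compatible with both filtrations via the scanning construction. A stratum-by-stratum comparison then shows that the two spectral sequences agree through total degree $D_\R(d;1,n)=(n-1)(d+1)-1$, so $i_d$ is a homology equivalence in this range. Since both $A_d(1,n)$ and $\Omega S^n$ are simply connected for $n\geq 2$ (the discriminant has real codimension at least $n\geq 2$ in $V_d$), Whitehead's theorem upgrades this to a homotopy equivalence through dimension $D_\R(d;1,n)$.

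The main technical obstacle lies in controlling the behaviour of the Vassiliev spectral sequence near the edge of the stable range. One has to verify that no differential originating from a stratum $F_k$ with $k$ too large can perturb the computation below total degree $D_\R(d;1,n)$, and one must identify the natural map $i_d$ precisely with the induced comparison of spectral sequences so that the homology equivalence can be concluded at finite $d$ rather than only in the direct limit. The bookkeeping of the codimensions of each stratum and the compatibility with the scanning map is where the bulk of the work lies.
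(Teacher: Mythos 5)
A point of reference first: the present paper does not prove Theorem \ref{thm: KY1} at all; it is recalled (with a qed) from \cite{KY1} and \cite{Y5}, and the proofs there are indeed carried out by the route you sketch: realize $A_d(1,n)$ as the complement of a resultant-type discriminant in the affine space of normalized tuples, build a (non-degenerate, truncated) simplicial resolution of the discriminant, use Alexander duality and the associated spectral sequence, and compare with $\Omega S^n$. So your general strategy matches the cited sources rather than anything proved in this text.

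As a proof, however, your proposal has two genuine gaps. First, the step you yourself defer --- identifying $i_d$ with a map of filtered objects (or with a scanning/jet construction) and showing that the induced morphism of spectral sequences is an isomorphism through total degree $(n-1)(d+1)-1$ --- is precisely the content of the theorem. Computing $H^*(A_d(1,n))$ and $H^*(\Omega S^n)$ separately and observing that the answers agree in a range does not show that the specific map $i_d$ is a homology equivalence in that range; the compatibility of $i_d$ with the two resolutions (or a stabilization argument $A_d\to A_{d+2}$ together with a limit comparison) has to be established, and this is where \cite{KY1} and \cite{Y5} do their work. Second, the passage from homology to homotopy equivalence is wrong as stated for $n=2$: the discriminant has codimension exactly $n$ (each common real root imposes $n+1$ conditions, one of which is absorbed by letting the root vary), so the complement is only $(n-2)$-connected, and $\pi_1(\Omega S^2)\cong\pi_2(S^2)\cong\Z\neq 0$, so neither space is simply connected when $n=2$. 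Whitehead's theorem therefore cannot be invoked directly; for $n\geq 3$ your argument is fine, but for $n=2$ one must check that both spaces are simple (or nilpotent) and that $i_d$ induces an isomorphism on fundamental groups in the stated range, or otherwise treat $\pi_1$ separately, as is done in the cited references.
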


\begin{thm}[The case $\K =\R$ and $m\geq 2$; \cite{KY4}]
\label{thm: KY4}
Let $m$ and $n$ be positive integers such that
$2\leq m<n$.
\begin{enumerate}
\item[$\I$] 
Let $g\in \Alg_d^*(\RP^{m-1},\RP^n)$ be an algebraic map of degree $d$.
Then the natural map
$i_{d}^{\p}:A_d(m,n;g)\to F(m,n;g)\simeq \Omega^mS^n$
is a homotopy equivalence up to dimension $D_{\R}(d;m,n)$ if $m+2\leq n$
and a homology equivalence up to dimension $D_{\R}(d;m,n)$ if $m+1=n$.
\item[$\II$]
The natural maps
$$
\begin{cases}
i_d:A_d(m,n)\to \Map_{[d]_2}^*(\RP^m,\RP^n)
\\
j_d:\tilde{A}_d(m,n)\to \Map_{[d]_2}(\RP^m,\RP^n)
\end{cases}
$$
are homotopy equivalences up to dimension $D_{\R}(d;m,n)$ if
$m+2\leq n$, and  homology equivalences up to dimension
$D_{\R}(d;m,n)$ if $m+1=n$.
\qed
\end{enumerate}
\end{thm}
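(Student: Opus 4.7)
The plan is to establish (i) first and then deduce (ii) by comparing the restriction fibrations
\[
A_d(m,n;g) \to A_d(m,n) \to \Alg_d^*(\RP^{m-1}, \RP^n)
\]
with their continuous analogue $F(m,n;g) \to \Map_{[d]_2}^*(\RP^m, \RP^n) \to \Map_{[d]_2}^*(\RP^{m-1}, \RP^n)$ and inducting on $m$, with the $m=1$ case (Theorem \ref{thm: KY1}) as the base; the unbased statement for $j_d$ is obtained from the based one by comparing the evaluation fibrations over $\RP^n$.

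For (i), fix a lift $\mathrm{g} = (g_0, \dots, g_n)$ of $g$ and identify $A_d(m,n;g)$ with the complement, in the real affine space of tuples $(g_0 + z_m h_0, \dots, g_n + z_m h_n)$ with each $h_k$ a real homogeneous polynomial of degree $d-1$ in $z_0, \dots, z_m$, of the discriminant $\Sigma_d$ consisting of tuples with a common real zero in $\RP^m \setminus \RP^{m-1}$. Construct the non-degenerate Vassiliev simplicial resolution $\mathcal{X}^{\Delta, d}$ of $\Sigma_d$ over this root-parameter space, so that Alexander duality turns the Borel--Moore homology of $\mathcal{X}^{\Delta, d}$ into the cohomology of $A_d(m,n;g)$. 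The key new input, adapted from Mostovoy \cite{Mo3} to the real setting in \cite{KY4}, is to replace $\mathcal{X}^{\Delta, d}$ by a carefully chosen truncated version which is a homotopy equivalence with the full resolution up to the improved dimension $D_\R(d;m,n) = (n-m)(d+1) - 1$; this is precisely what drives the numerical bound in the theorem.

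Filter the truncated resolution by simplicial degree to obtain a spectral sequence converging to $H_*(A_d(m,n;g))$, whose $E^1$-page is expressible through Borel--Moore homology of configuration-type strata in $\RP^m \setminus \RP^{m-1}$ with coefficients twisted by a local system reflecting the real structure on $\C^{n+1}$. The same construction, applied to an infinite-degree limit, produces a spectral sequence converging to $H_*(F(m,n;g)) \cong H_*(\Omega^m S^n)$, and $i_d'$ induces a morphism between the two spectral sequences which is an isomorphism on $E^1_{p,q}$ for $p + q \leq D_\R(d;m,n)$. When $m+2 \leq n$ both spaces are simply-connected throughout the relevant range, so the resulting homology equivalence upgrades to a homotopy equivalence; in the borderline case $m+1 = n$ this upgrade fails and only the weaker homology statement remains available.

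The main obstacle will be controlling the error introduced by truncating the resolution: one must verify that the discarded high-simplicial strata contribute only in homological degrees strictly above $D_\R(d;m,n)$. This is the technical heart of the argument and precisely where the improvement over \cite{AKY1} is realized, demanding delicate dimension counts for the configuration-type strata and care in tracking the real versus complex contributions in the twisted coefficient systems. Since the entire procedure runs in strict parallel with \cite{KY4}, it is enough to refer the reader to that paper for the numerical verifications rather than reproduce them here.
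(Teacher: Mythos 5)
Note first that this paper contains no proof of Theorem \ref{thm: KY4}: it is quoted verbatim from \cite{KY4}, with Remark \ref{Remark: proof} only indicating that the method there is to feed Mostovoy's truncated simplicial resolutions \cite{Mo3} into the argument of \cite{AKY1}, and your outline (discriminant complement in the affine space of tuples $(g_k+z_mh_k)$, non-degenerate simplicial resolution and its truncation giving the bound $D_{\R}(d;m,n)$, comparison of spectral sequences, simple-connectivity upgrade when $m+2\leq n$, then induction on $m$ via the restriction fibrations and the evaluation fibration for $j_d$, with base case Theorem \ref{thm: KY1}) is exactly that method, so it is consistent with the cited proof at the level of detail you give. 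The one slip is your local system ``reflecting the real structure on $\C^{n+1}$'', which pertains to the complex case (Theorem \ref{thm: KY5-I}); in the real case at hand the coefficients are the usual $\pm\Z$ twisting coming from orientations of the simplices of the resolution, but this is cosmetic and does not affect the outline.
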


\begin{rem}\label{Remark: AKY1}
(i)
The above theorem was recently proved in \cite{KY4} and  is an improvement of the main result of \cite{AKY1}.
\par
(ii)
A map $f:X\to Y$ is called {\it a homotopy} (resp. {\it a homology}) {\it equivalence up to dimension} $D$ if
$f_*:\pi_k(X)\to \pi_k(Y)$ (resp. $f_*:H_k(X,\Z)\to H_k(Y,\Z)$) is an isomorphism for any
$k<D$ and an epimorphism for $k=D$.
\par
(iii)
Let $G$ be a finite group and let
$f:X\to Y$ be a $G$-equivariant map.
Then a map $f:X\to Y$ is called 
{\it a $G$-equivariant homotopy }(resp. {\it homology})
{\it equivalence up to dimension }$D$ if the restriction
map
$f^{H}:X^{H}\to Y^{H}$
is a homotopy equivalence (resp. homology) equivalence up to
dimension $D$ for any subgroup $H\subset G$.
\end{rem}

\par\vspace{2mm}\par


Now we state the main results of this paper as follows.

\begin{thm}\label{thm: KY5-I}
Let $m$ and $n$ be positive integers such that $2\leq m\leq 2n$. 
\begin{enumerate}
\item[$\I$]
Let $g\in \Alg_d^{\C}(\RP^{m-1},\CP^n)$ be a fixed algebraic map of 
degree $d$.
The natural map
$i_{d}^{\p\p}:A_d^{\C}(m,n;g)\to F(m,n;g)\simeq \Omega^mS^{2n+1}$
is a homotopy equivalence up to dimension $D_{\C}(d;m,n)$ if
$m<2n$ and a homology equivalence up to dimension $D_{\C}(d;m,n)$
if $m=2n$. 
\item[$\II$]
The natural maps
$$
\begin{cases}
i_d^{\C}:A_d^{\C}(m,n)\to \Map^*_{[d]_2}(\RP^m,\CP^n)
\\
j_d^{\C}:\tilde{A}_d^{\C}(m,n)\to \Map_{[d]_2}(\RP^m,\CP^n)
\end{cases}
$$
are homotopy equivalences up to dimension $D_{\C}(d;m,n)$
if 
$m<2n$ and  homology equivalences up to dimension $D_{\C}(d;m,n)$
if $m=2n$.
\end{enumerate}
\end{thm}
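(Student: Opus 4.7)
The plan is to adapt the strategy of \cite{KY4}, which itself refines the non-degenerate simplicial resolution technique of \cite{Mo3}, to the present complex-coefficient setting. The numerical identity $D_{\C}(d;m,n)=D_{\R}(d;m,2n+1)$ recorded in the excerpt governs the analogy: the role played by $S^n$ in the real case is taken here by $S^{2n+1}$, reflecting that a single complex vanishing condition has real codimension $2(n+1)$ rather than $n+1$.

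For Part \I, fix $g$ and write $\Sigma_d\subset A_d^*(\C)$ for the complement of $A_d^{\C}(m,n;g)$ inside the affine space of admissible tuples $(g_0+z_mh_0,\dots,g_n+z_mh_n)$. First I would construct the Vassiliev--Mostovoy non-degenerate truncated simplicial resolution $\SZ$ of $\Sigma_d$, with vertices given by the real common zeros in $\RP^m\setminus\RP^{m-1}$ and with Mostovoy's truncation level $\lfloor(d+1)/2\rfloor+1$ that is the key novelty of \cite{Mo3} over earlier resolutions of \cite{AKY1}. Alexander duality in the ambient affine space then identifies, up to the appropriate dimension shift, the cohomology of $A_d^{\C}(m,n;g)$ with the Borel--Moore homology of $\SZ$, and the natural filtration by number of vertices yields a spectral sequence whose $E^1$-page is built from ordered configuration spaces in $\R^m$ with fibre rank $2n+1$ per point. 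A parallel spectral sequence, arising from the Snaith--Cohen--Kallel--Milgram configuration-space model of $\Omega^m S^{2n+1}\simeq F(m,n;g)$, is available on the target side. A term-by-term comparison, exactly as in \cite[\S 3--4]{KY4} but with $S^n$ everywhere replaced by $S^{2n+1}$, shows that $i_d^{\p\p}$ induces isomorphisms through total degree $D_{\C}(d;m,n)$; this gives a homotopy equivalence when the target is simply connected ($m<2n$) and only a homology equivalence in the edge case $m=2n$.

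Part \II then follows from Part \I by a standard fibre-over-base comparison. Restriction of polynomial tuples to $\RP^{m-1}$ presents $A_d^{\C}(m,n)$ as a quasi-fibration over a suitable space of admissible restrictions with fibre $A_d^{\C}(m,n;g)$, while the target side carries the Sasao-type restriction fibration
\begin{equation*}
F(m,n;g)\ \hookrightarrow\ \Map^*_{[d]_2}(\RP^m,\CP^n)\ \to\ \Map^*_{[d]_2}(\RP^{m-1},\CP^n).
\end{equation*}
The five-lemma applied to the long exact sequences of homotopy (resp. homology) groups, using Part \I on fibres together with prior results (Theorem \ref{thm: KY1} in its complex variant and the equivariant approximation theorem of \cite{KY3}) on bases, transfers the equivalence to the total spaces and yields the claim for $i_d^{\C}$. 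The statement for $j_d^{\C}$ is then deduced by passing to the free $\C^*$-orbit spaces, $\Gamma_d^{\C}$ being a principal $\C^*$-bundle.

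The principal obstacle I anticipate is verifying non-degeneracy of the truncated resolution in this mixed real/complex setting. Although the polynomials carry complex coefficients, only their real common zeros are excluded, so the codimension strata of $\Sigma_d$ must be computed with mixed conventions; one must check that Mostovoy's truncation $\lfloor(d+1)/2\rfloor+1$ still yields contractible resolution fibres over each singular stratum, and that the codimension count reproduces the factor $(2n-m+1)(\lfloor(d+1)/2\rfloor+1)-1$ appearing in $D_{\C}^*(d;m,n)$. Once this dimension bookkeeping is secured, the remainder of the argument proceeds formally as in \cite{KY4}, in accord with the author's remark that the proofs are analogous.
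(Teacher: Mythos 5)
Your overall architecture---a truncated simplicial resolution of the discriminant in the style of \cite{Mo3} and \cite{KY4}, Alexander duality plus the vertex-number spectral sequence compared with the configuration space model of $\Omega^m S^{2n+1}$ for part (i), then induction on $m$ via restriction fibrations and the five lemma for part (ii)---is indeed the route the paper intends (the paper gives no details; Remark \ref{Remark: proof} simply says to run the argument of \cite{KY3} with the ideas of \cite{KY4}). The genuine problem is your quantitative calibration, which undercuts exactly the content of the theorem. You set the truncation level at $\lfloor\frac{d+1}{2}\rfloor+1$ and propose, as the key verification, that the codimension count reproduce $(2n-m+1)\big(\lfloor\frac{d+1}{2}\rfloor+1\big)-1=D^*_{\C}(d;m,n)$. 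But that is the \emph{old} bound of \cite[Theorems 1.4, 1.5]{KY3}, the very result Theorem \ref{thm: KY5-I} improves upon (Remark \ref{Remark: proof}: $D^*_{\K}(d;m,n)<D_{\K}(d;m,n)$ for $d\ge 2$). The factor $\lfloor\frac{d+1}{2}\rfloor+1$ is the artifact of the earlier non-degenerate resolutions of \cite{AKY1} and \cite{KY3}, where only about half the points can be guaranteed to impose independent conditions in the real-point setting; the whole point of Mostovoy's truncated resolutions as used in \cite{KY4} is to dispense with non-degeneracy beyond the truncation and show the tail contributes only above the range, so that the effective count of controllable points becomes $d+1$ and the range becomes $(2n-m+1)(d+1)-1$. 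Your middle paragraph asserts isomorphisms through $D_{\C}(d;m,n)$ ``exactly as in KY4,'' yet the step you single out as the principal obstacle is calibrated to the weaker number; as written, the sketch establishes only the known theorem of \cite{KY3}, not the stated one.

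A secondary, fixable misstep is the deduction of the statement for $j_d^{\C}$ ``by passing to the free $\C^*$-orbit spaces, $\Gamma_d^{\C}$ being a principal $\C^*$-bundle.'' The principal $\C^*$-bundle is $A_d(m,n)(\C)\to\tilde{A}_d^{\C}(m,n)$; the map $\Gamma_d^{\C}:\tilde{A}_d^{\C}(m,n)\to\Alg_d(\RP^m,\CP^n)$ has contractible but non-uniform fibres, and whether such projections are homotopy equivalences is precisely the open question discussed in Section \ref{section: projection} (Conjecture \ref{conj} in the real case), so it cannot be invoked. The standard deduction, and the one intended here, works directly with $\tilde{A}_d^{\C}(m,n)$: compare the evaluation fibration $\Map^*_{[d]_2}(\RP^m,\CP^n)\to\Map_{[d]_2}(\RP^m,\CP^n)\to\CP^n$ with the corresponding evaluation (quasi-)fibration $A_d^{\C}(m,n)\to\tilde{A}_d^{\C}(m,n)\to\CP^n$ and apply the five lemma, bypassing $\Gamma_d^{\C}$ altogether. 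Finally, the base of your induction for part (ii) should be the complex $m=1$ case (available from \cite{KY3} and \cite{GKY2}), not a ``complex variant'' of Theorem \ref{thm: KY1}, which is nowhere stated in the paper.
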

\par\vspace{2mm}\par
Note that the
complex conjugation on $\C$ naturally induces $\Z/2$-actions on
the spaces $\Alg_d^{\C}(m,n;g)$ and $A_d^{\C}(m,n)$.
In the same way, it also induces a $\Z/2$-action on $\CP^n$ and 
it extends to $\Z/2$-actions on the spaces
$\Map^*(\RP^m,S^{2n+1})$ and $\Map^*_{\epsilon}(\RP^m,\CP^n)$,
 where we identify 
$S^{2n+1}=\{(w_0,\cdots ,w_n)\in\C^{n+1}:\sum_{k=0}^n\vert w_k\vert^2 =1\}$
and regard $\RP^m$ as a $\Z/2$-space with the trivial $\Z/2$-action.
\par\vspace{3mm}\par
Since $(i_d^{\p\p})^{\Z/2}=i_d^{\p}$, $(i_d^{\C})^{\Z/2}=i_d$ and $(j_d^{\C})^{\Z/2}=j_d$,
by  Theorem \ref{thm: KY4} and Theorem \ref{thm: KY5-I} we obtain the following
result.  

\begin{crl}\label{cor: KY5-II}
Let  $m$ and $n$  be positive integers such that
$2\leq m<n$.
\begin{enumerate}
\item[$\I$]
Let $g\in \Alg_d^*(\RP^{m-1},\CP^n)$ be an algebraic map of   degree $d$.
The natural map
$i_{d}^{\p\p}:A_d^{\C}(m,n;g)\to F(m,n;g)\simeq \Omega^mS^{2n+1}$
is a $\Z/2$-equivariant homotopy equivalence
 up to dimension $D_{\R}(d;m,n)$ if
$m+2\leq n$ and a $\Z/2$-equivariant homology equivalence up to dimension $D_{\R}(d;m,n)$
if $m+1=n$. 
\item[$\II$]
The natural maps
$$
\begin{cases}
i_d^{\C}:A_d^{\C}(m,n)\to \Map^*_{[d]_2}(\RP^m,\CP^n)
\\
j_d^{\C}:\tilde{A}_d^{\C}(m,n)\to \Map_{[d]_2}(\RP^m,\CP^n)
\end{cases}
$$
are
$\Z/2$-equivariant homotopy equivalences up to dimension $D_{\R}(d;m,n)$
if
$m+2\leq n$ and  are $\Z/2$-equivariant homology equivalences up to dimension 
$D_{\R}(d;m,n)$
if $m+1=n$.
\qed
\end{enumerate}
\end{crl}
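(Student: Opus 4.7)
The plan is to deduce this corollary directly from Theorem \ref{thm: KY4} and Theorem \ref{thm: KY5-I} by verifying the $\Z/2$-equivariant equivalence criterion of Remark \ref{Remark: AKY1}(iii) one subgroup at a time. For $G=\Z/2$ there are only two subgroups to check: the trivial subgroup $\{e\}$ and the whole group $\Z/2$. For $H=\{e\}$, the condition $f^H \colon X^H \to Y^H$ is simply the condition on the underlying map $f$, so this is handled by Theorem \ref{thm: KY5-I}. For $H=\Z/2$, we need to control the induced map on $\Z/2$-fixed point sets.

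The first step is therefore to identify the fixed-point sets explicitly. Since complex conjugation acts on the spaces $A_d^{\C}(m,n)$, $\tilde{A}_d^{\C}(m,n)$, $A_d^{\C}(m,n;g)$ via its action on the polynomial coefficients, the fixed-point subspaces are exactly the corresponding spaces of tuples with real coefficients, namely $A_d(m,n)$, $\tilde{A}_d(m,n)$, and $A_d(m,n;g)$. On the target side, the fixed points of $\CP^n$ under conjugation are $\RP^n$, so the fixed-point subspaces of $\Map_{[d]_2}^*(\RP^m,\CP^n)$, $\Map_{[d]_2}(\RP^m,\CP^n)$ and $F^{\C}(m,n;g)$ are $\Map_{[d]_2}^*(\RP^m,\RP^n)$, $\Map_{[d]_2}(\RP^m,\RP^n)$ and $F(m,n;g)$. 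With these identifications (and because $g$ is taken real in part $\I$), the excerpt already records the crucial compatibility
\[
(i_d^{\p\p})^{\Z/2}=i_d^{\p},\qquad (i_d^{\C})^{\Z/2}=i_d,\qquad (j_d^{\C})^{\Z/2}=j_d,
\]
so that the fixed-point maps are precisely the real maps whose approximation behavior is controlled by Theorem \ref{thm: KY4}.

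The second step is a bookkeeping comparison of the bounds. Theorem \ref{thm: KY5-I} gives equivalences on the total spaces up to dimension $D_{\C}(d;m,n)=(2n-m+1)(d+1)-1$, while Theorem \ref{thm: KY4} gives equivalences on the fixed-point sets up to dimension $D_{\R}(d;m,n)=(n-m)(d+1)-1$. Since $2\le m<n$ implies $2n-m+1\ge n-m+1>n-m\ge 1$, we have $D_{\C}(d;m,n)\ge D_{\R}(d;m,n)$, so the ambient equivalences of Theorem \ref{thm: KY5-I} automatically hold up to the smaller dimension $D_{\R}(d;m,n)$. Combining the two yields the required $\Z/2$-equivariant equivalences up to dimension $D_{\R}(d;m,n)$, in the appropriate homotopy or homology sense (homotopy when $m+2\le n$, homology when $m+1=n$, consistently on both fixed and free parts because $n\ge m+1\ge 3$ forces $m<2n$ so that Theorem \ref{thm: KY5-I} always gives the homotopy version in this range).

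There is no substantive obstacle here; the argument is essentially a formal consequence of the two earlier approximation theorems. The only point demanding a little care is the verification that the $\Z/2$-fixed-point subspaces really coincide with the real spaces of \cite{AKY1}, \cite{KY4}, i.e.\ that taking real coefficients commutes with the conditions defining $A_d^{\C}(m,n)$, $\tilde{A}_d^{\C}(m,n)$, and $A_d^{\C}(m,n;g)$. This reduces to the observation that absence of non-trivial common \emph{real} zeros is a condition on the real locus and is manifestly preserved under restricting to tuples with real coefficients, and that projectivisation by $\C^*$ intersects the real locus in the $\R^*$-orbits.
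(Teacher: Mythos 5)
Your argument is correct and is essentially the paper's own: the paper deduces the corollary in one line from the identifications $(i_d^{\p\p})^{\Z/2}=i_d^{\p}$, $(i_d^{\C})^{\Z/2}=i_d$, $(j_d^{\C})^{\Z/2}=j_d$ together with Theorem \ref{thm: KY4} and Theorem \ref{thm: KY5-I}, which is exactly your subgroup-by-subgroup check for $H=\{e\}$ and $H=\Z/2$ combined with the observation $D_{\R}(d;m,n)\leq D_{\C}(d;m,n)$. Your extra care about the fixed-point sets being the real spaces (and $g$ being real) is a sensible elaboration of what the paper leaves implicit.
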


\begin{rem}\label{Remark: proof}
Remark that $D_{\K}^*(d;m,n)< D_{\K}(d;m,n)$ for $d\geq 2$.
Thus,
we may regard 
Theorem \ref{thm: KY5-I} and Corollary \ref{cor: KY5-II}
as
the improvements of  
\cite[Theorems 1.4, 1.5 and Corollary 1.7]{KY3}.
The method of the proof of Theorem \ref{thm: KY5-I} consists of applying the ideas used in 
the proof of Theorem \ref{thm: KY4} to the argument of the proof of
\cite[Theorems 1.4 and 1.5]{KY3}. 
We shall therefore omit the details and refer the reader to \cite{KY3} and \cite{KY4}.
\end{rem}

\begin{rem}
There is a mistake in the statement of  \cite[Corollary 1.7]{KY3};  the condition $2\leq m\leq 2n$ should be
replaced by  $2\leq m<n$ as in Corollary \ref{cor: KY5-II}.
\end{rem}

\section{The projections $\Psi_d$ and $\Gamma_d$.}
\label{section: projection}

In this section, we consider the following conjecture stated in \cite{AKY1}.
\par\vspace{2mm}
\begin{conj}[\cite{AKY1}, Conjecture 3.8]\label{conj}
{\it If $1\leq m<n$, the  projection maps
$\Psi_d:A_d(m,n)\to \Alg_d^*(\RP^m,\RP^n)$ and
$\Gamma_d:\tilde{A}_d(m,n)\to\Alg_d(\RP^m,\RP^n)$ are homotopy equivalences.}
\end{conj}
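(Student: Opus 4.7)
The plan is to prove the conjecture by showing that both $\Psi_d$ and $\Gamma_d$ are quasi-fibrations with contractible fibres; since all spaces involved are semi-algebraic and therefore have CW homotopy type, this upgrades the weak homotopy equivalence (which a quasi-fibration with contractible fibres produces) to a genuine homotopy equivalence via Whitehead's theorem. I would work throughout with $\Psi_d$ and reduce $\Gamma_d$ to it using the commutative square relating the two, in which the horizontal maps $A_d(m,n)\to \tilde{A}_d(m,n)$ and $\Alg_d^*(\RP^m,\RP^n)\to \Alg_d(\RP^m,\RP^n)$ are fibrations with identified fibre (reflecting the shared passage from based/normalized to unbased/scalar-quotiented data).

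For the fibre analysis, fix $\phi\in \Alg_d^*(\RP^m,\RP^n)$ and choose its essentially unique primitive representative $(\phi_0^0,\ldots,\phi_n^0)$ of common degree $e=e(\phi)\le d$. Every tuple in $\Psi_d^{-1}(\phi)$ is of the form $p\cdot(\phi_0^0,\ldots,\phi_n^0)$ for some $p\in \R[z_0,\ldots,z_m]$ homogeneous of degree $d-e$; the "no common real root" requirement forces $p$ to be nonvanishing on $\R^{m+1}\setminus\{0\}$, and the normalization of the coefficient of $z_0^d$ in $f_0$ pins the sign of $p$, so that $p$ is everywhere positive. The set of such normalized positive polynomials is convex in the ambient vector space, so $\Psi_d^{-1}(\phi)$ is convex and in particular contractible (and $d-e$ is automatically even, since homogeneous polynomials of odd total degree must change sign).

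Next I would stratify the target by excess degree, $\Sigma_{2k}:=\{\phi:d-e(\phi)=2k\}$, pull this back to a stratification of $A_d(m,n)$, and apply the Dold--Lashof--Stasheff criterion for quasi-fibrations in the form used by Segal \cite{Se} and Mostovoy \cite{Mo3}. Over each open stratum $\Sigma_{2k}$ one shows that $\Psi_d$ is a locally trivial bundle whose fibre is the contractible space of normalized positive homogeneous polynomials of degree $2k$; then one checks inductively that for each $k$ the inclusion $\Psi_d^{-1}(\overline{\Sigma_{<2k}})\hookrightarrow \Psi_d^{-1}(\overline{\Sigma_{<2k}}\cup V)$ is a weak equivalence for a suitable open neighborhood $V$ of $\Sigma_{2k}$. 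Thanks to the contractibility of every fibre, this last step reduces to producing continuous deformations of the common-factor polynomial $p$ as $\phi$ crosses a stratum boundary without acquiring real zeros. Combining these ingredients yields the quasi-fibration property of $\Psi_d$, contractibility of fibres yields a weak equivalence, and Whitehead's theorem closes the argument; $\Gamma_d$ then follows from the commutative square.

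The principal obstacle, and the reason the conjecture remains open for $m\ge 2$, is establishing local triviality along the strata together with the compatibility across stratum boundaries. For $m=1$, binary forms factor completely into linear and irreducible quadratic blocks, so the primitive representative of $\phi$ can be recovered by a continuous selection of conjugate root pairs in a symmetric product; this makes the stratification genuinely algebraic and drives the $m=1$ proof. For $m\ge 2$, no such global parameterization of primitive representatives is available, and although the space of positive homogeneous polynomials of fixed degree is convex, it carries no obvious coordinate system over which to trivialize the relevant bundles. The decisive new ingredient will presumably be a partition-of-unity style construction---built from a semi-algebraic triangulation of $\Alg_d^*(\RP^m,\RP^n)$ together with carefully chosen positive "bump" polynomials of prescribed degree subordinated to it---that produces the local sections of $\Psi_d$ needed to feed the quasi-fibration machinery.
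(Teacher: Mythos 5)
Your overall strategy --- contractible convex fibres of normalized everywhere-positive homogeneous forms of even excess degree, a Dold--Thom quasi-fibration criterion, and Whitehead's theorem to upgrade the weak equivalence --- is exactly the heuristic the paper itself records before its theorem, and your fibre computation agrees with the paper's. You are also candid that the outline does not close the case $m\ge 2$; the paper likewise proves only $m=1$ and leaves the general statement as a conjecture, so on that point you and the authors are in the same position. Where you genuinely diverge is in the $m=1$ argument. You propose to stratify $\Alg_d^*(\RP^1,\RP^n)$ directly by excess degree and to verify the quasi-fibration property for $\Psi_d$ itself, recovering primitive representatives by continuous selection of conjugate root pairs; this is close in spirit to Mostovoy's original argument, which the authors explicitly describe as unconvincing precisely because it is unclear how the structure of $\Psi_d$ enters. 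The paper instead recasts $A_d(1,n)$ as a configuration space of coloured particles in $\C$ symmetric about the real axis, introduces the auxiliary ``mod $2$'' configuration space $T(d,n)$ with a map $Q_d:\Alg_d^*(\RP^1,\RP^n)\to T(d,n)$, and proves that $Q_d$ and the composite $Q_d\circ\Psi_d$ (not $\Psi_d$ directly) are quasi-fibrations with contractible fibres, filtering $T(d,n)$ by the number of particles of each colour and by the number of singular sub-configurations, with the Dold--Thom deformations realized by attractive forces pulling nearby particles together; the claim for $\Psi_d$ then follows by two-out-of-three on homotopy groups. The configuration-space route buys explicit, checkable neighbourhood deformations for the Dold--Thom criterion, whereas your direct route still owes the local trivializations over each stratum and the deformations across stratum boundaries --- exactly the step where one is forced back onto the complete factorization of binary forms, and which, as you correctly note, is the obstruction to extending any of this to $m\ge 2$.
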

It is easy to see that $\Gamma_d$ is a homotopy equivalence if $\Psi_d$ is so.
Thus, we only consider the projection $\Psi_d$.
There is much evidence that suggests that $\Psi_d$ is a homotopy equivalence.
Indeed, the fibre of it over $\Alg_{d-2k+2}^*(\RP^m,\RP^n)\setminus
\Alg_{d-2k}^*(\RP^m,\RP^n)$ is homeomorphic to the space of everywhere positive
$\R$-coefficient polynomials in $(m+1)$-variables of degree $2k$ with leading coefficient $1$,
which is convex and contractible.
If it is a quasi-fibration, it is a homotopy equivalence.
Although we cannot prove this in general, we can do so it for $m=1$.
\begin{thm}\label{thm: conj}
If $m=1$, Conjecture \ref{conj} 
is true.
\end{thm}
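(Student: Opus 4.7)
The plan is to show that $\Psi_d$ is a quasi-fibration with contractible fibers, so that its long exact sequence forces it to be a weak homotopy equivalence; since $A_d(1,n)$ is an open semialgebraic subset of a real affine space and $\Alg_d^*(\RP^1,\RP^n)$ also has CW homotopy type, this upgrades to a genuine homotopy equivalence. As the paper already notes, $\Gamma_d$ is then a homotopy equivalence as well.

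The contractibility of fibers is straightforward. Any $f\in\Alg_d^*(\RP^1,\RP^n)$ has a well-defined minimal representation degree $d-2k$ ($k\geq 0$), and $\Psi_d^{-1}(f)$ consists of tuples $p\cdot(\tilde f_0,\ldots,\tilde f_n)$ where $(\tilde f_0,\ldots,\tilde f_n)$ is the primitive normalized representation of $f$ and $p$ ranges over the space $P_{2k}$ of homogeneous degree-$2k$ real polynomials in $(z_0,z_1)$ that are everywhere positive and have leading coefficient $1$ in $z_0^{2k}$. Dehomogenizing by $t=z_1/z_0$, $P_{2k}$ is a convex set of positive univariate polynomials, and hence contractible.

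For the quasi-fibration property, I would use the filtration $U:=\Alg_d^*(\RP^1,\RP^n)\supset V_1\supset V_2\supset\cdots$ where $V_k:=\Alg_{d-2k}^*(\RP^1,\RP^n)$, with open strata $W_k:=V_k\setminus V_{k+1}$. Each $V_{k+1}$ is closed in $V_k$ by a compactness-plus-common-factor argument: normalize coefficients, extract a convergent subsequence of degree-$(d-2k-2)$ representations, and factor out any common real roots that appear in the limit, which only lowers the representation degree. On each stratum, taking the GCD of a tuple produces a trivialization $\Psi_d^{-1}(W_k)\cong W_k\times P_{2k}$. Inductively applying the Dold--Thom criterion then reduces the problem to constructing, for each $k$, an open neighborhood $N_k$ of $V_{k+1}$ in $V_k$ and a deformation $h_t$ of $N_k$ into $V_{k+1}$, together with a compatible lift $H_t$ of $h_t$ to $\Psi_d^{-1}(N_k)$; since all fibers are contractible, $H_1$ trivially induces weak equivalences on fibers.

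The construction of the compatible deformation is the main obstacle, and it is where the hypothesis $m=1$ enters essentially. As the base point is deformed from $W_k\cap N_k$ into $V_{k+1}$, an emerging common no-real-root quadratic factor of the primitive part must be continuously transferred into the positive cofactor in $P_{2k}$, producing a new pair (primitive part of lower degree, cofactor in $P_{2k+2}$). For $m=1$, the fact that real homogeneous polynomials in two variables factor into linear and irreducible quadratic pieces, together with the convex structure of $P_{2j}$, allows such an interpolation to be carried out explicitly on each stratum of fixed combinatorial type. For $m\geq 2$ this machinery breaks down, which is why the conjecture remains open in general and why --- as explained in the introduction --- the argument of \cite[Proposition 2.1]{Mo1} is incomplete. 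Once the quasi-fibration property is established, the weak equivalence follows from contractibility of fibers, and the desired homotopy equivalence follows from the CW homotopy type of both spaces.
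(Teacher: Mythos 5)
Your strategy is genuinely different from the paper's. You attack $\Psi_d$ head on: filter the base $\Alg_d^*(\RP^1,\RP^n)$ by minimal representation degree, trivialise $\Psi_d$ over each stratum via the GCD splitting with the convex contractible fibres $P_{2k}$ of positive factors, and verify the Dold--Thom criterion for $\Psi_d$ itself. The paper deliberately avoids this route: it never asserts that $\Psi_d$ is a quasi-fibration, even for $m=1$. Instead it passes to configuration-space models ($A_d(1,n)$ as coloured particles in $\C$ symmetric about the real axis, $\Alg_d^*(\RP^1,\RP^n)$ as the quotient in which full-colour coincidences off the real axis annihilate) and to Mostovoy's space $T(d,n)$ of configurations modulo $2$ on the real line, and proves in Lemma~\ref{lemma: K} that $Q_d$ and $Q_d\circ\Psi_d$ are quasi-fibrations with contractible fibres, the Dold--Thom deformations being attraction (force-field) flows on $T(d,n)$; both composites are then weak equivalences, so $\Psi_d$ is one by two-out-of-three. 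Your route, if completed, would actually yield slightly more than the paper proves (namely that $\Psi_d$ itself is a quasi-fibration) and stays entirely in the polynomial/GCD picture; the paper's route buys simpler degenerations, since on $T(d,n)$ everything happens on the real line and is handled uniformly by particle attraction, and it simultaneously recovers Mostovoy's Proposition~\ref{prop: Mo} for $Q_d$.

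The soft spot is exactly the step you flag as the main obstacle. The Dold--Thom criterion needs one continuous deformation of an open neighbourhood $N_k$ of $\Alg_{d-2k-2}^*$ inside $\Alg_{d-2k}^*$ onto $\Alg_{d-2k-2}^*$, together with a covering deformation on the $A_d$-level; defining the interpolation ``on each stratum of fixed combinatorial type'', as you propose, does not by itself give this, because continuity across changes of combinatorial type is the whole difficulty (several near-coincident full-colour clusters, ambiguity about which particles form the cluster, and clusters drifting toward the real axis, where completing the coincidence is forbidden since it would create a common real root). The repair is the same device the paper uses in its own model: a deformation generated by a short-range attraction between particles of different colours, which is manifestly continuous in the configuration, can be cut off so as to keep clusters away from the real axis, and lifts to $A_d(1,n)$ simply as the induced motion of the full degree-$d$ configuration with the positive factor's conjugate pairs carried along (the weak-equivalence condition on fibres being automatic by contractibility). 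With that replacement, and with the convention that $\Alg_d^*(\RP^1,\RP^n)$ carries the quotient (configuration) topology --- which is also what makes your stratum-wise GCD trivialisation and the closedness of $\Alg_{d-2k-2}^*$ in $\Alg_{d-2k}^*$ straightforward --- your argument reaches essentially the same level of rigour as the paper's admittedly informal proof.
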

\noindent{\it Proof. }
We will exploit the convenient fact that spaces of tuples of polynomials in one variable can be identified with certain configuration spaces of points or particles in the complex plane. More exactly, we identify the space $A_d(1,n)$ with the space of $d$ particles of each of $n+1$ different colours, in the case $n=2$, say, red, blue and yellow, located in $\Bbb C = \Bbb R^2$ symmetrically with respect to the real axis and such that no three particles of different colour lie at the same point on the real axis. 

\begin{center}
\includegraphics[width=3.2cm]{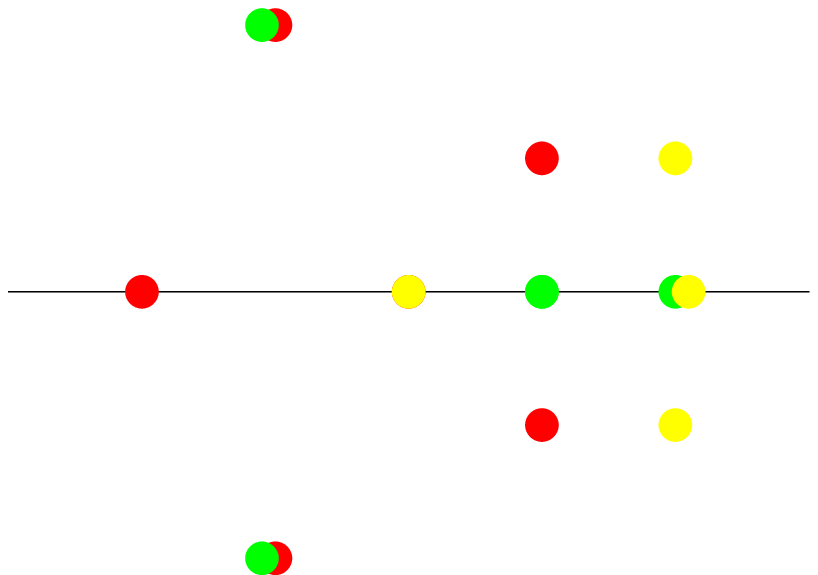} 
\end{center}
Note that off the real axis the particles are completely unrestricted. The space of algebraic maps $A_d(1,n)$ can also be thought of as a configuration space of  $k$ particles of each of $n+1$ different colours as above, where $k\leq d$, but with the additional property that when $n+1$ different particles meet (off the real line) they (and their conjugate particles) disappear (this space is topologised as the obvious quotient of the preceding one). 

Finally, we need one more configuration space, introduced in \cite{Mo1}.  Let $T(d,n)$ denote the space of  no more than $k_i\le d$ particles of colour $i$, where $k_i=d$ mod $2$, on the real axis , with the property that any even number of particles of the same colour at the same point on the real axis vanish, and  of course, as before no $n+1$ particles of different colours lie at the same point. In other words,  $T(d,n)$ is a configuration space modulo 2. 
There is a natural map $\Phi :A_d(1,n)\to T(d,n)$ which factors through 
$\Psi_d$,
$\Phi =Q_d\circ \Psi_d:A_d(1,n)
\stackrel{\Psi_d}{\longrightarrow} \Alg_d^*(\RP^1,\RP^n) \stackrel{Q_d}{\longrightarrow} T(d,n).$

 \begin{prp}[\cite {Mo1}, Proposition 2.1]
\label{prop: Mo}
The maps $\Psi_d$ and $Q_d$ above are homotopy equivalences. 
\end{prp}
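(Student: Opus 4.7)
The plan is to exploit the fact---noted by the authors but not used in \cite{Mo1}---that $\Psi_d$ is a quasi-fibration with contractible fibres, and then to conclude via the long exact sequence of a quasi-fibration together with Whitehead's theorem. The key piece of structure is the natural decreasing filtration
\[
\Alg_d^*(\RP^1,\RP^n) \supset \Alg_{d-2}^*(\RP^1,\RP^n) \supset \Alg_{d-4}^*(\RP^1,\RP^n) \supset \cdots
\]
obtained by multiplying a reduced representative by a homogeneous real everywhere-positive factor in $z_0,z_1$.

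First, I would record the fibre structure. Over a point $f \in \Alg_{d-2k}^*(\RP^1,\RP^n)\setminus\Alg_{d-2k-2}^*(\RP^1,\RP^n)$ with reduced representative $(f_0,\ldots,f_n)$ of minimal degree $d-2k$, the fibre of $\Psi_d$ consists of the tuples $(p f_0,\ldots,p f_n)$ where $p\in\R[z_0,z_1]$ is homogeneous of degree $2k$, everywhere positive on $\R^2\setminus\{0\}$, and normalised so that $p f_0$ carries the required leading coefficient. The set of such $p$ is convex in coefficient space, hence contractible. The reduced representative exists and depends continuously on $f$ because a real homogeneous polynomial in two variables factors canonically as a product of $\R$-linear and irreducible quadratic factors, and the positive common factor is the product of the irreducible quadratic factors common to all $f_i$.

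Second---the technical heart of the argument---I would verify the Dold--Thom criterion for this filtration. For each $k\geq 0$ one constructs an open neighbourhood $U_k$ of $\Alg_{d-2k-2}^*$ inside $\Alg_{d-2k}^*$, a deformation retraction $r_k : U_k \to \Alg_{d-2k-2}^*$, and a covering deformation of $\Psi_d^{-1}(U_k)$ onto $\Psi_d^{-1}(\Alg_{d-2k-2}^*)$ whose restriction to each fibre is a weak equivalence between two convex positive-polynomial sets. Concretely, the covering deformation slides the nearly-common pair of complex-conjugate roots of a tuple in $\Psi_d^{-1}(U_k)$ continuously until it becomes an actual common root, multiplying the representative by the corresponding irreducible quadratic factor. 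The main obstacle will be the continuity of this selection across the stratum boundary when a colliding common pair meets a non-common pair; this is precisely where the one-variable case is manageable---thanks to the unique factorisation of univariate real polynomials into $\R$-linear and irreducible quadratic factors---and where the multivariate case resists a canonical continuous choice. Once verified, the criterion (in the form used in \cite{KY3}, \cite{KY4}) proves $\Psi_d$ is a quasi-fibration.

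Third, with $\Psi_d$ established as a quasi-fibration with contractible fibres, the associated long exact sequence shows it is a weak homotopy equivalence. Since $A_d(1,n)$ and $\Alg_d^*(\RP^1,\RP^n)$ are real semialgebraic sets and hence have the homotopy type of finite-dimensional CW complexes, Whitehead's theorem upgrades this to a genuine homotopy equivalence. For the map $Q_d$ the same scheme applies after stratifying $T(d,n)$ by the underlying real-axis support; alternatively, once $\Psi_d$ is known to be a homotopy equivalence, a two-out-of-three argument applied to $\Phi = Q_d\circ \Psi_d$ together with a direct scanning-type identification of $A_d(1,n)$ with $T(d,n)$ (which is the approach taken by Mostovoy for $\Phi$ itself) delivers the statement for $Q_d$.
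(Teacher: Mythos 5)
Your proposal is essentially correct in outline, but it takes a genuinely different route from the paper for the main step. You prove directly that $\Psi_d:A_d(1,n)\to \Alg_d^*(\RP^1,\RP^n)$ is a quasi-fibration, filtering the base by the minimal degree $\Alg_d^*\supset\Alg_{d-2}^*\supset\cdots$ and using the convexity of the positive-multiplier fibres, with Dold--Thom deformations that merge a cluster of $n+1$ nearly-common conjugate root pairs into an actual common quadratic factor. The paper never establishes the quasi-fibration property of $\Psi_d$ itself (even for $m=1$); instead it proves that $Q_d$ and the composite $\Phi=Q_d\circ\Psi_d$ are quasi-fibrations with contractible fibres over the mod-$2$ configuration space $T(d,n)$, filtered by the numbers of particles of each colour and by the number of singular sub-configurations, and then deduces that $\Psi_d$ is a (weak) homotopy equivalence by two-out-of-three. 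The paper's choice of base buys concreteness: in $T(d,n)$ the neighbourhoods and retractions are transparent particle-collision deformations (an ``attracting force field''), whereas your route requires controlling the closure of the minimal-degree strata inside the mapping-space topology of $\Alg_d^*$ and making a continuous, canonical choice of the colliding cluster near the stratum boundary --- exactly the point you flag, and which is manageable in one variable because roots of real univariate polynomials depend continuously on coefficients; your argument also makes explicit why the general case $m\ge 2$ resists this method. What your route buys is stronger information on $\Psi_d$ (the quasi-fibration structure conjectured in Section \ref{section: projection}), but note two caveats: for $Q_d$ your primary suggestion (stratify $T(d,n)$ and repeat) is in substance the paper's own argument, so the two proofs converge there; and your fallback of invoking a ``scanning-type identification'' of $A_d(1,n)$ with $T(d,n)$ to get $\Phi$, and hence $Q_d$, is dangerously close to citing the very claim of \cite[Proposition 2.1]{Mo1} whose proof this section is meant to replace, so it should not be relied upon. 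Finally, your care about upgrading a weak equivalence to a homotopy equivalence (CW homotopy type of the spaces involved) is a point the paper passes over silently, and is worth keeping.
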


A proof of this proposition is given in \cite{Mo1} but, as we stated earlier,   it does not seem convincing  to us, so we will give here a different one. 
More precisely, we need the following:

 \begin{lmm}
\label{lemma: K}
The maps $Q_d$ and $Q_d \circ \Psi_d$  are quasi-fibrations with contractible fibres. 
\end{lmm}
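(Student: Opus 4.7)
The plan is to stratify the base $T(d,n)$ by the combinatorial type of the collision pattern on the real line, so that two configurations lie in the same stratum exactly when they share the same support and multiplicity structure. Each stratum $T_\alpha$ is a locally closed real-analytic submanifold, and over each $T_\alpha$ both $Q_d$ and $Q_d\circ\Psi_d$ restrict to locally trivial fibre bundles. Fixing $\tau=(\tau_0,\ldots,\tau_n)\in T(d,n)$, a tuple $(f_0,\ldots,f_n)\in A_d(1,n)$ will then lie in the fibre of $Q_d\circ\Psi_d$ over $\tau$ iff it admits a factorization $f_i=p_{\tau_i}\,w_i$, where $p_{\tau_i}$ is the product of the real linear forms on $\RP^1$ corresponding to the points of $\tau_i$, and each $w_i$ is a real homogeneous polynomial of even degree $2e_i:=d-|\tau_i|$ that is nonnegative on $\R^2$ and compatible with the normalization of $A_d(1,n)$, subject to the constraint that the full tuple has no common real zero.

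The central step is to contract the fibre via a straight-line homotopy to a canonical point. I pick a fixed reference tuple $(w_0^\star,\ldots,w_n^\star)$ in which each $w_i^\star$ is a positive multiple of $(z_0^2+z_1^2)^{e_i}$ satisfying the required normalization, and then use
\[
H_t(w_0,\ldots,w_n):=\bigl((1-t)\,w_i+t\,w_i^\star\bigr)_{i=0}^{n},\qquad t\in[0,1].
\]
By convexity, each entry stays normalized and nonnegative on $\R^2$, and for every $t\in(0,1]$ it becomes strictly positive there since $w_i^\star$ is. Hence if $[a:b]\in\RP^1$ were a common real zero of the deformed tuple at some $t>0$, one would need $p_{\tau_i}(a,b)=0$ for each $i$, i.e.\ $[a:b]\in\bigcap_{i=0}^{n}\tau_i$; but this intersection is empty by the defining condition of $T(d,n)$. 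At $t=0$ the tuple lies in the fibre by assumption. The homotopy therefore remains inside the fibre and contracts it onto the reference tuple. The analysis for $Q_d$ is essentially identical, with each $w_i$ further restricted to be strictly positive on $\R^2\setminus\{0\}$ (reflecting the quotient by common complex factors of a representing tuple); the same straight-line contraction applies.

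To conclude I will invoke the standard open-closed stratification argument (going back to Dold--Thom and used for spaces of this type in \cite{Mo1}, \cite{AKY1}, and \cite{KY4}): over each open stratum both projections are locally trivial fibrations with contractible fibre by the step above, and for every closed stratum $\overline{T_\alpha}$ one builds a distinguished neighbourhood $U$ together with a fibrewise deformation retract of its preimage onto the preimage of $\overline{T_\alpha}$. The explicit fibre contraction furnishes the essential fibre-level input, and it is compatible with a ``spreading apart'' deformation of nearby collisions in the base.

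I expect the main obstacle to lie in this final gluing step rather than in the fibre contraction itself. Arranging the spreading-apart deformation so that it is compatible both with the bundle structure over each stratum and with the straight-line contraction across stratum boundaries demands a careful semi-algebraic choice of neighbourhoods; this is precisely the step that is treated only informally in \cite{Mo1}, and presumably the reason a separate argument is being offered here.
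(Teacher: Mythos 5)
Your central step---contracting the fibre by a straight-line homotopy in coefficient space---has a genuine gap, because your description of the fibre is too small. The map $Q_d\circ\Psi_d$ records only the real zeros of the components reduced mod $2$, so the fibre over $\tau$ is not confined to tuples $f_i=p_{\tau_i}w_i$ with $w_i\ge 0$ on $\R^2$: replacing some $w_i$ (say with $i\ge 1$) by $-w_i$ gives a different point of $A_d(1,n)$ (indeed a different algebraic map $\Psi_d(f)$) with the same image in $T(d,n)$, so the fibre also contains tuples in which $w_i$ is nonpositive, and at degenerate points even identically zero. For such a tuple your homotopy $(1-t)w_i+t\,w_i^\star$ with $w_i^\star$ positive definite leaves the fibre: for $w_i=-z_1^2$ and $w_i^\star=z_0^2+z_1^2$ one gets $t\,z_0^2-(1-2t)\,z_1^2$ for $0<t<1/2$, an indefinite form with two simple real zeros, so that colour acquires two new particles on the real axis and the image in $T(d,n)$ changes. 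Convexity therefore contracts only the ``nonnegative'' piece of the fibre, not the fibre. The same objection applies to your description of the fibre of $Q_d$ (``each $w_i$ strictly positive on $\R^2\setminus\{0\}$''): only \emph{common} non-real factors are divided out in $\Alg_d^*(\RP^1,\RP^n)$, so components in that fibre may still have their own non-real roots, real zeros of even multiplicity, or be negative definite. There is also a smaller incompatibility: with the based normalization recalled in Section 1 (the $z_0^d$-coefficient of $f_i$ vanishes for $i\ge 1$, if the real case is normalized as the complex one), a positive multiple of $(z_0^2+z_1^2)^{e_i}$ is not an admissible $w_i^\star$ when $[1:0]\notin\tau_i$. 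One can repair the contraction (deform $w_i$ toward $\pm(z_0^2+z_1^2)^{e_i}$ according to its sign, then connect the sign components through tuples with a vanishing entry, which requires examining how $Q_d\circ\Psi_d$ is defined there), but that is not the argument you gave. The paper avoids the whole issue by contracting at the level of root configurations rather than coefficients: non-real roots are moved to fixed locations and real roots in excess of the mod-$2$ residue are split into two halves and moved away in pairs, a deformation that visibly never changes the image in $T(d,n)$ and is insensitive to the scalar/sign ambiguity of the components.

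On the quasi-fibration half of the lemma you defer exactly the step that carries the content. The paper commits to a specific scheme: it filters $T(d,n)$ first by the numbers $p_1,\dots,p_{n+1}$ of particles of each colour, then filters $T(d,n;p_1,\dots,p_{n+1})$ by the number of singular sub-configurations (collisions of $n$ particles of distinct colours other than a given one), observes that both maps are locally trivial bundles over the resulting differences, and then verifies the Dold--Thom criterion by constructing explicit neighbourhoods and deformations in which particles of different colours (respectively pairs of particles of the same colour) are pulled together, finally inducting over the colours. Your stratification by collision type is similar in spirit, and your fibre contraction is meant to supply the fibre-level input, but as written the neighbourhood/deformation construction---which you yourself flag as the main obstacle---is absent, so the quasi-fibration claim is not yet established; combined with the failure of the contraction on the full fibre, the lemma is not proved by the proposal in its current form.
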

From this it follows at once that $\Psi_d$ is a homotopy equivalence, and
this completes the proof of Theorem \ref{thm: conj}.
\qed

\begin{proof}[Proof of Lemma \ref{lemma: K}]
We first prove that the fibre of  $Q_d \circ\Psi_d$ (and $Q_d$) over any point in $T(d,n)$ is contractible. Consider a configuration in $T(d,n)$.
\begin{center}
\begin{picture}(80,20)
\put(20,5){\line(1,0){100}}
\put(30,2.5){\textcolor{red}{$\bullet$}}
\put(59,2.5){\textcolor{yellow}{$\bullet$}}
\put(86,2.5){\textcolor{green}{$\bullet$}}
\put(89,2.5){\textcolor{yellow}{$\bullet$}}
\end{picture}
\end{center}

In the fibre over this configuration, all points in the upper half plane  are sent linearly to the fixed point $(1,0)$ and those in the lower half plane to $(-1,0)$. For a $2k$ or $2k+1$ fold particle lying on the real line, $k$ of the particles are moved to ${1,0}$ and $k$ are moved to  to $(-1,0)$ leaving $0$ or $1$ particles in place. 
This argument shows that the fibres of both  $Q_d\circ \Psi_d$ and $Q_d$ are contractible. 

\begin{center}
\includegraphics[width=4cm]{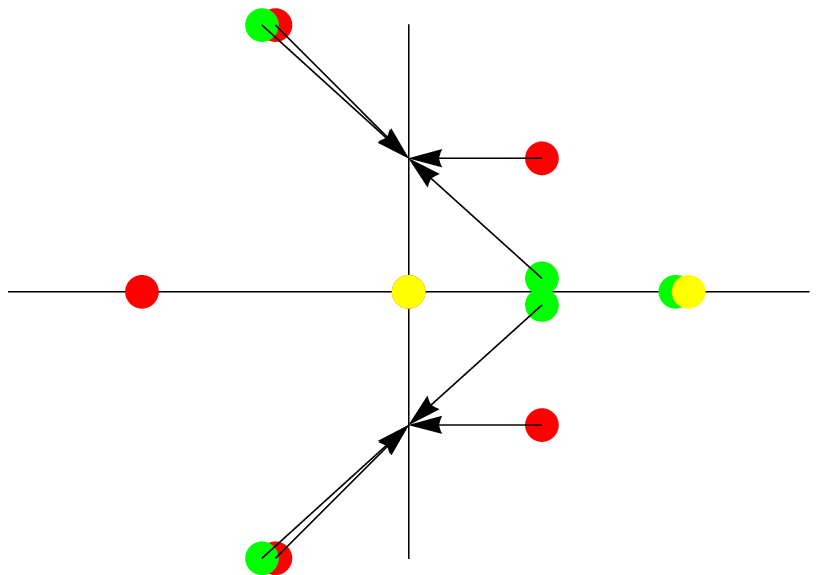} 
\end{center}
\par
Next, we will show that the maps  $Q_d \circ\Psi_d$ and $Q_d$ are both quasi-fibrations. Consider a point (configuration of particles)  in $T(d,n)$. By a \lq singular sub-configuration  for colour $i$\rq\  we mean a collection $n$ particles of distinct colours other than the $i$-th colour which are all located at the same point.  Intuitively, we think of a singular configuration for the $i$-th colour as an obstacle through which a particle of the $i$-th colour cannot pass. By a singular sub-configuration (without specifying the colour) we mean a singular sub-configuration for any colour. 

Clearly, it is easy to see that the topology of the fibre over a particular configuration in $T(d,n)$ is determined by two quantities: the number of particles of each colour in  $T(d,n)$ and the number of singular sub-configurations (for each colour). Hence, it the proof we will consider filtrations with respect to the number of particles of each colour and the number of singular sub-configurations.  

For non-negative integers $p_1,p_2,\dots, p_{n+1}$ with $\sum _i p_i\le d$, let  $T(d,n;p_1,\dots,p_{n+1})$ denote the subspace of $T(d,n)$ consisting of  configurations with precisely $p_i$ particles of  the $i$-th colour. We start by proving that the restrictions of the maps $Q_d\circ \Psi_d$ and $Q_d$ to the pre-images of $T(d,n;p_1,\dots,p_{n+1})$ are quasi-fibrations.
 For any integer $k\ge 0$, let $T_k(d,n;p_1,\dots,p_{n+1})$ denote the subspace of $T(d,n;p_1,\dots,p_{n+1})$ consisting of configurations containing exactly $k$ singular sub-configurations. It is easy to show that the restriction of $Q_d \circ\Psi_d$ and $Q_d$ to the pre-image of $T_k(d,n;p_1,\dots,p_{n+1})$ is a locally trivial fibre bundle. Now we filter the space $T(d,n;p_1,\dots,p_{n+1})$ by closed subspaces $D_k(d,n;p_1,\dots,p_{n+1})$ of configurations containing $\ge k$ singular sub-configurations. 
 Set theoretic differences between these spaces are the spaces $T_k(d,n;p_1,\dots,p_{n+1})$ over which the maps are locally trivial fibre bundles and hence quasifibrations. Now we apply the Dold-Thom criterion \cite[Lemma 4.3]{Hatch}. To do so we have to construct an open neighbourhood of $D_{k+1}(d,n;p_1,\dots,p_{n+1})$ in $D_{k}(d,n;p_1,\dots,p_{n+1})$, a deformation of this neighbourhood onto $D_{k+1}(d,n;p_1,\dots,p_{n+1})$, together with a corresponding covering neighbourhood and a covering deformation required by the Dold-Thom criterion (since all the fibres are contractible the condition that the induced maps on the fibres are homotopy equivalences is automatically satisfied). Such neighbourhoods and deformations are easy to describe intuitively. The set of points of the required  neighbourhood of  $D_{k+1}(d,n;p_1,\dots,p_{n+1})$ consists of the points of  $D_{k+1}(d,n;p_1,\dots,p_{n+1})$ together with those configurations in $D_{k}(d,n;p_1,\dots,p_{n+1})$ with at least one non-singular sub-configuration of $n$ particles of different colours  contained in a \lq sufficiently small\rq\  interval. Here \lq sufficiently small\rq\  refers to the requirement that the particles in this sub-configuration be much nearer to each other than they are to any other particle and that the length of the minimal interval containing the sub-configuration be much less than the length of any interval containing a collection of $n+1$ particles of different colour.  The deformation can now be defined by introducing a force of attraction between particles of different colour (e.g. a force field satisfying  an inverse-square law). 
By induction on $k$ we show that the maps  $Q_d \circ\Psi_d$ and $Q_d$ are quasifibrations over $T(d,n;p_1,\dots,p_n)$. We now fix $p_1,p_2,\dots, p_n$ and filter the space $T(d,n)$ according to the number of points of the $n+1$-th colour. The set theoretic differences between the terms of the filtration are precisely the spaces $T(d,n;p_1,\dots,p_{n+1})$ and we have already proved that the restriction of the maps $Q_d\circ \Psi_d$ and $Q_d$ to the inverse images of these spaces are quasi-fibrations. We apply again the Dold-Thom criterion. For this purpose we need to construct open neighbourhoods of spaces of configurations  with no more than $k-2$ particles of the last colour in the space of configurations of no more than $k$-particles of the last colour. The method is again analogous to the one we used earlier. Our deformation will pull together pairs of particles of the last colour which are very close by means of a gravitational force field between particles of the last colour. For this purpose we must avoid hitting a singular sub-configuration. Again, it is easy to see that we can choose open neighbourhoods and deformations with the right properties. This proves that the maps restricted to the pre-images of spaces with the number of particles of the first $n-1$ colours fixed are quasifibrations. Now we filter these spaces according to the number of particles of the last but one colour. Proceeding by induction we see that $Q_d \circ\Psi_d$ and $Q_d$ are quasi-fibrations.
\end{proof}

\end{document}